\documentclass[12pt]{amsart}

\usepackage{amscd,amsmath,latexsym,amsthm,amsfonts,amssymb,graphicx,geometry}

\usepackage{enumerate}

\allowdisplaybreaks

\usepackage[dvipsnames]{xcolor}
\definecolor{linkblue}{RGB}{0, 102, 204}     
\definecolor{softnavy}{RGB}{20, 60, 120}     

\usepackage{hyperref}
\hypersetup{
colorlinks=true,
linkcolor=linkblue,
citecolor=linkblue,
urlcolor=linkblue,   
}

\usepackage[norefs,nocites]{refcheck}

\usepackage{geometry}
\usepackage[normalem]{ulem}
\usepackage{cancel}

\usepackage{tikz-cd}

\newtheorem{theorem}{Theorem}[section]
\newtheorem{proposition}[theorem]{Proposition}

\newtheorem{corollary}[theorem]{Corollary}

\newtheorem{remark}[theorem]{Remark}

\newtheorem{problem}[theorem]{Problem}

\newtheorem{definition}[theorem]{Definition}
\numberwithin{equation}{section}
\newcommand{\N}{\mathbb{N}}
\newcommand{\R}{\mathbb{R}}
\newcommand{\C}{\mathbb{C}}
\newcommand{\K}{\mathbb{K}}

\begin{document}
\title[SOT-large subspaces of non-cyclic operators]{SOT-large subspaces of non-cyclic operators}

\author[Albuquerque]{N. G.~Albuquerque}
\address[N.~Albuquerque]{Departamento de Matem\'{a}tica \newline\indent
Universidade Federal da Para\'{i}ba \newline\indent
Jo\~ao Pessoa - PB \newline\indent
58.051-900 (Brazil)}
\email{\href{mailto:ngalbuquerque@mat.ufpb.br}{ngalbuquerque@mat.ufpb.br}}

\author[Ara\'{u}jo]{G.~Ara\'{u}jo}
\address[G.~Ara\'{u}jo]{Departamento de Matem\'{a}tica \newline\indent
Universidade Estadual da Para\'{i}ba \newline\indent
Campina Grande - PB \newline\indent
58.429-500 (Brazil)}
\email{\href{mailto:gustavoaraujo@servidor.uepb.edu.br}{gustavoaraujo@servidor.uepb.edu.br} }

\author[Bernal-Gonz\'alez]{L.~Bernal-Gonz\'alez}
\address[L.~Bernal-Gonz\'alez]{Departamento de An\'alisis Matem\'{a}tico \newline\indent
Instituto de Matem\'aticas Antonio de Castro Brzezicki (IMUS) \newline\indent
Universidad de Sevilla \newline\indent
Avda.~Reina Mercedes, 41012-Sevilla (Spain)}
\email{\href{mailto:lbernal@us.es}{lbernal@us.es}}

\author[Dem\'etrio Jr.]{E.~Dem\'etrio~Jr.}
\address[E.~Dem\'etrio~Jr.]{Departamento de Matem\'{a}tica \newline\indent
Universidade Federal da Para\'{i}ba \newline\indent
Jo\~ao Pessoa - PB \newline\indent
58.051-900 (Brazil)}
\email{\href{mailto:evandiodemetriojunior@gmail.com}{evandiodemetriojunior@gmail.com}}

\author[Seoane-Sep\'ulveda]{J.~Seoane-Sep\'ulveda}
\address[J.~Seoane-Sep\'ulveda]{Instituto de Matem\'atica Interdisciplinar (IMI) \newline\indent
Departamento de An\'alisis y Matem\'atica Aplicada \newline\indent
Facultad de Ciencias Matem\'aticas \newline\indent
Universidad Complutense de Madrid \newline\indent
Madrid \newline\indent
28040 (Spain)}
\email{\href{mailto:jseoane@mat.ucm.es}{jseoane@mat.ucm.es}}

\subjclass[2020]{46B87, 47A16, 47B37}

\keywords{Hypercyclicity, cyclicity, spaceability, injectivity, dense-range operators}

\begin{abstract}
In this paper, we consider the strong operator topology (SOT) on the space $\mathcal{L} (X)$ of continuous linear operators on an infinite-dimensional Fr\'echet space $X$. The existence of SOT-dense subspaces as well as of SOT-closed infinite-dimensional subspaces inside the family of all non-cyclic operators is established. Other classes of operators, such as non-dense range or non-injective operators, are also studied in this regard. Moreover, we prove for the sequence space $\ell_p$ $(0 < p < \infty)$ that the family of all non-cyclic members of $\mathcal{L}(\ell_p)$ contains an isometric copy of $\ell_p$.
\end{abstract}

\maketitle


\section{Introduction and notation}

\quad In this paper, we focus on the algebraic-topological size --in a sense to be specified later-- of the class of all operators on a Fr\'echet space (in particular, on a Banach space) that do {\it not} enjoy cyclicity properties.

\vskip 3pt

Although our operators will mainly act in a Banach or Fr\'echet space, the concepts to be handled make sense in more general settings. Let $X$ be a
(Hausdorff) topological vector space
over the field $\K := \R$ or $\C$.
Let $\mathcal{L}(X)$ denote the vector space of all (linear, continuous) operators from $X$ into itself. We consider the following three dynamical properties of the members of $\mathcal{L}(X)$. If $x_0 \in X$ and $T \in \mathcal{L}(X)$, then the orbit and the projective orbit of $x_0$ under $T$ are defined respectively as the sets
\[
\mathcal{O}(T;x_0) := \{T^n x_0: n \ge 0\} \hbox{ \ \ and \ \ } \K \cdot \mathcal{O}(T;x_0) := \{\alpha T^n x_0: n \ge 0, \alpha \in \mathbb{K}\}.
\]
The operator $T$ is {\it hypercyclic} (resp., {\it supercyclic, cyclic}), when there exists a vector $x_0 \in X$ such that $\mathcal{O}(T;x_0)$  (resp., $\K \cdot (\mathcal{O}(T;x_0)$, ${\rm span}(\mathcal{O}(T;x_0))$) is dense in $X$, and $x_0$ is said a hypercyclic (resp., supercyclic, cyclic) vector for $T$.
It is plain that hypercyclicity implies supercyclicity and, in turn, supercyclicity implies cyclicity. Moreover, it is evident that if $X$ supports a cyclic (so, a supercyclic, or a hypercyclic) operator, then $X$ must be separable. While cyclicity is a long-standing notion in operator theory, hypercyclicity and supercyclicity are relatively new.

\vskip 3pt

For more information about hypercyclicity and supercyclicity we refer to the excellent books \cite{Bayart} and \cite{Grosse} (see also the survey \cite{grossesurvey}).

\vskip 3pt

For future references, we adopt the following notation:
\begin{itemize}
\item[$\bullet$] $\mathcal{CYC}(X) := \{T \in \mathcal{L}(X): \, T$ is cyclic$\}$
\item[$\bullet$] $\mathcal{SC}(X) := \{T \in \mathcal{L}(X): \, T$ is supercyclic$\}$
\item[$\bullet$] $\mathcal{HC}(X) := \{T \in \mathcal{L}(X): \, T$ is hypercyclic$\}$
\item[$\bullet$] $\mathcal{K}(X) := \{T \in \mathcal{L}(X): \, T$ is compact$\}$
\item[$\bullet$] $\mathcal{DR}(X) := \{T \in \mathcal{L}(X): \, T$ has dense range$\}$
\item[$\bullet$] $\mathcal{SRJ}(X) := \{T \in \mathcal{L}(X): \, T$ is surjective$\}$
\item[$\bullet$] $\mathcal{INJ}(X) := \{T \in \mathcal{L}(X): \, T$ is injective$\}$.
\end{itemize}

Recall that an operator $T \in \mathcal{L}(X)$ is said to be compact if the image of every bounded set is relatively compact in $X$.
Moreover, an F-space $X$ is a complete metrizable topological vector space, while $X$ is called a Fréchet space whenever it is
a locally convex F-space.

\vskip 3pt

The following properties can be found in the cited works \cite{Bayart}, \cite{grossesurvey} or \cite{Grosse}:
\begin{enumerate}
\item[$\bullet$] If $\mathcal{HC}(X) \ne \varnothing$ then $X$ must be infinite-dimensional.
\item[$\bullet$] If $X$ is a Fr\'echet (hence, if it is a Banach) separable infinite-dimensional space, then $\mathcal{HC}(X) \ne \varnothing$.
\item[$\bullet$] If $X$ is a Banach space (more generally, if $X$ is just a locally convex space: see \cite[Proposition 8]{bonetperis}) then
$\mathcal{K}(X) \cap \mathcal{HC}(X) = \varnothing$.
\end{enumerate}
In addition, the following inclusions are straightforward or trivial:
\begin{equation} \label{equation-inclusions}
\mathcal{HC}(X) \subset \mathcal{SC}(X) \subset \mathcal{DR}(X) \cap \mathcal{CYC}(X) \hbox{ \ and \ } \mathcal{SRJ}(X) \subset \mathcal{DR}(X).
\end{equation}
Furthermore, in the union
\[
\mathcal{DR}(X) \cup \mathcal{CYC}(X) \cup \mathcal{INJ}(X),
\]
none of the sets is redundant, that is, no class of operators is contained in the union of the remaining two in general. Below, we describe explicit examples of these situations, which will be essential for our main results in Section~\ref{Sec3} (see Theorems~\ref{Theorem-main-1} and~\ref{Theorem-main-2}).

First, we prove that there exist an infinite-dimensional Banach space \(X\) and a bounded linear operator \(T : X \to X\) such that \(T\) is surjective (hence has dense range), but is neither injective nor cyclic. Let $X = \ell^2 \oplus \mathbb{K}^2$ under the standard norm and $T(x,u) := (Bx, u)$, where $B: \ell^2 \to \ell^2$ is the backward shift operator $B(x_0, x_1, \ldots) = (x_1, x_2, \ldots)$ and $u \in \mathbb{K}^2$, then $T$ is surjective and satisfies $\ker(T) \neq \{0\}$ since $T(e_0,0) = (0,0)$ for $e_0 = (1,0,\ldots)$, hence $T$ is not injective, but since the orbits are given by $T^n(x,u) = (B^n x, u)$ for  $n \ge 0$. Every element in the span of the orbit of $(x,u)$ is of the form $p(T)(x,u) = (p(B)x, p(1)u)$ for some polynomial $p$, it follows that the second component of the orbit space always belongs to $\operatorname{span}\{u\}$, and thus $T$ is not cyclic.

Now we show that there exist injective operators that are neither dense-range nor cyclic. For instance, the operator  $(Tf)(x) = \sin(\pi x)\, f(x)$ on $C[0,1]$ is clearly injective, its range is not dense because $T(C[0,1]) \subset \{ f \in C[0,1] : f(0) = 0 = f(1) \}$. Now let us prove that for any given function $f \in C[0,1]$, ${\rm span}\left(\mathcal{O}(T;f)\right)$ is never dense. Suppose first that \(f\) is such that \(f(0) = 0\). Then the constant function \(g \equiv 1\) does not belong to the closure of ${\rm span}(\mathcal{O}(T;f))$. Indeed, if there existed a sequence
$h_n = \sum_{i=0}^{m(n)} b_i^{(n)} T^i(f) \in \operatorname{span} \left(\{T^j(f) : j \ge 0\}\right)$ ($n \in \mathbb{N}$)
such that \(h_n \to g\), we would obtain \(0 = h_n(0) \to g(0) = 1\), a contradiction. Now suppose that \(f(0) \neq 0\). If ${\rm span}(\mathcal{O}(T;f))$ were dense in \(C[0,1]\), then there would exist a sequence
$g_n = \sum_{i=0}^{m(n)} a_i^{(n)} T^i(f) \in \operatorname{span} \left(\{T^j(f) : j \ge 0\}\right)$
($n \in \mathbb{N}$) such that \(g_n \to I_d\), that is, \(g_n(x) \to x\) for every \(x \in [0,1]\). In particular, we would have
$g_n(0) = a_0^{(n)} f(0),\, g_n(1) = a_0^{(n)} f(1)$.
Hence,
$a_0^{(n)} f(0) \to 0 \quad \text{and} \quad a_0^{(n)} f(1) \to 1,$
which is impossible. Therefore, the operator \(T\) cannot be cyclic.

Finally, we exhibit a cyclic operator that fails to have dense range and to be injective. Consider the operator $T : C^\infty([0,1])\to C^\infty([0,1]), (Tf)(x) = x f'(x)$. Recall that a sequence $(f_n)_n$ converges to $f$ in $C^\infty([0,1])$ if and only if $f_n^{(k)} \to f^{(k)}$ uniformly on $[0,1]$, for every $k\ge0$. The operator \(T\) is not injective, since every constant function belongs to its kernel. Indeed, if \(f\equiv c\), then \(Tf(x)=x f'(x)=0\), hence \(\ker(T)\neq\{0\}\). Moreover, as previously, the range of \(T\) is not dense in \(C^\infty([0,1])\) because of $\operatorname{Im}(T) \subseteq \{f\in C^\infty([0,1]): \, f(0)=0\}$.
We now show that the function \(f(x)=e^x\) is a cyclic vector for \(T\). A direct computation gives
\(Tf(x)=xe^x\),
\(T^2f(x)=e^x(x^2+x)\),
and
\(T^3f(x)=e^x(x^3+3x^2+x)\).
More generally, $T^n f(x) = e^x p_n(x),$
where \(p_n\) is a polynomial of degree \(n\) satisfying \(p_n(0)=0\) and $T^0f(x)=e^x$. From the previous identities, we obtain
\(xe^x = Tf \in \operatorname{span}\mathcal{O}(T;f)\),
\(x^2e^x=T^2f-Tf \in \operatorname{span}\mathcal{O}(T;f)\),
and
\(x^3e^x = T^3f-3x^2e^x-xe^x \in \operatorname{span}\mathcal{O}(T;f)\).
Proceeding inductively, we conclude that
\[
x^n e^x \in \operatorname{span}\mathcal{O}(T;f)
\qquad \text{for every } n\in\mathbb N.
\]
It follows from the Weierstrass approximation theorem that
$f(x) = e^x$ is a cyclic vector for $T$. Consequently, $T$ is cyclic.

\vskip 3pt

We will also use some terminology extracted from the modern theory of lineability, initiated by V.I.~Gurariy at the beginning of the millennium.
The reader is referred to \cite{ABPS} for background on lineability. The aim of lineability is locating large algebraic structures inside
non necessarily linear sets. Assume that $X$ is a vector space and that $A \subset X$. Then $A$ is said to be
\begin{enumerate}
\item[$\bullet$] {\it lineable} provided that there exists an infinite-dimensional vector space $V$ such that $V \subset A \cup \{0\}$,
\item [$\bullet$] {\it pointwise lineable} if for each $x \in A$ there exists an infinite-dimensional vector space $V_x$ such that $x \in V_x \subset A \cup \{0\}$,
\item[$\bullet$] {\it algebrable} if $A$ is contained in some (linear) algebra and there exists an infinitely generated algebra $M$ such that $M \subset A \cup \{0\}$,
\item[$\bullet$] {\it dense-lineable} if $X$ is a topological vector space and there exists a dense vector subspace $V \subset X$ such that
                 $V \subset A \cup \{0\}$, and
\item[$\bullet$] {\it spaceable} if $X$ is a topological vector space and there exists a closed infinite-dimensional vector subspace $V \subset X$ such that
                 $V \subset A \cup \{0\}$.
\end{enumerate}
Of course, each one of the properties algebrability, spaceability, or dense-lineability (in this case, if $X$ is infinite-dimensional), implies lineability.

\vskip 3pt

A natural topology on $\mathcal{L}(X)$ is the so-called {\it strong operator topology} (SOT) (see, e.g., \cite[Chapter 2]{simon}). In it, a net $(T_\alpha)_{\alpha \in I}$
tends to $T$ if and only if $T_\alpha (x) \to T(x)$ for every $x \in X$. In the case that $X$ is a Banach space, it is a well-known fact that the SOT topology
is weaker than the norm topology on $\mathcal{L}(X)$ (the norm is $\|T\| = \sup_{\|x\|=1} \|T(x)\|$), and that both topologies coincide if and only if $X$ is finite dimensional.

\vskip 3pt

This paper is organized as follows. In section 2 we will make some con\-si\-de\-ra\-tions about the linear or topological size of the class of hypercyclic ope\-ra\-tors on a Banach space $X$ under the norm topology or the SOT-topology on $\mathcal{L}(X)$. This will motivate the analysis of the size of the {\it complement} \,of this class (and of other related ones) in the space $\mathcal{L}(X)$. In section 3 we establish our main results, namely, the family of all operators on an infinite-dimensional 
Fr\'echet space $X$ that are neither cyclic nor dense-range nor injective, contains, except for zero, a SOT-dense subspace as well as a SOT-closed infinite-dimensional subspace of $\mathcal{L}(X)$. In section 4 we prove that, for every $p \in (0,\infty )$, the family of all non-cyclic or non-dense range members of \,$\mathcal{L}(\ell_p)$ \,contains an isometric copy of the sequence space $\ell_p = \left\{(x_n) \in \K^{\N} : \, \sum_{n=1}^{\infty} |x_n|^p <\infty \right\}$.
Finally, and in connection with the results in section 3, we have incorporated in section 5 a statement about the lineability of the class of operators that are
not $N$-supercyclic.

\section{Size of the sets of hypercyclic and non-hypercyclic operators}

\quad In 1969 Rolewicz \cite{rolewicz} provided the first example of a hypercyclic ope\-ra\-tor on a Banach space.
Specifically, he proved that if
\[
B : (x_n) \in \ell_p \longmapsto (x_{n+1}) \in \ell_p
\]
is the backward shift on the sequence space $\ell_p$
under the norm \,$\| (x_n) \|_p = (\sum_{n=1}^{\infty} |x_n|^p)^{1/p}$, where \,$1 \le p < \infty$, then any scalar multiple \,$\lambda B$
\,is hypercyclic on $\ell_p$ \,if \,$|\lambda | > 1$. Despite \,$\ell_p$ \,is not locally convex (hence not Fr\'echet)
for \,$0 < p < 1$, by using the so-called Hypercyclicity Criterion (see, e.g., \cite[Theorem 1.6]{Bayart}) we can obtain that
these Rolewicz operators are also hypercyclic on \,$\ell_p$ \,when endowed with the F-norm $\| (x_n) \|_p = \sum_{n=1}^{\infty} |x_n|^p$.

\vskip 3pt

Since the publication of Rolewicz's paper, many advances on hy\-per\-cy\-cli\-ci\-ty have taken place.
Once a great deal of concrete examples of hypercyclic operators has been given, a natural arising question is the size
of the fa\-mi\-ly of these special operators within the space of all operators.

\vskip 3pt

Concerning topological size, B\`es and Chan \cite{beschan,beschan2,Chan} (see also \cite{prajitura}) were able to prove that the set
of hypercyclic operators on an infinite-dimensional separable Fréchet space $X$ is SOT-dense in $\mathcal{L}(X)$.
On the contrary, Wu \cite{Wu} had established in 1994 that the class of all cyclic operators (hence, the class of hypercyclic operators) on a
Banach space is nowhere dense under the operator norm topology. Therefore, the family of non-cyclic operators (hence, the family of non-hypercyclic operators) is ``huge''
(residual and, in particular, dense) in this topology. In fact, the denseness of this family in the Hilbert space case is known since 1972
\cite{fillmoresw}. A spectral description of the norm-closure of $\mathcal{HC}(X)$ on an infinite-dimensional separable complex Hilbert space $X$ was furnished by Herrero \cite{Herrero} in 1991.

\vskip 3pt

Regarding algebraic size, it is plain that the set $\mathcal{HC}(X)$ is never lineable if $X$ is a normed space. Indeed, every $T \in \mathcal{HC}(X)$ must satisfy $\|T\| > 1$ (otherwise, all orbits under $T$ would be bounded, so non-dense). Then $(1 + \|T\|)^{-1} T \not\in \mathcal{HC}(X)$ and, consequently, $\mathcal{HC}(X) \cup \{0\}$ does not even contain a one-dimensional subspace.

\vskip 3pt

Nevertheless, a non-normable topological vector space may well admit large vector spaces consisting, except for zero, of hypercyclic operators. For instance, in 1991 Godefroy and Shapiro \cite{godefroyshapiro} established that, for every nonconstant entire function $\Phi$ of exponential type, the associated differential operator $\Phi (D)$ is hypercyclic on the space $H(\C )$ of entire functions on the complex plane, that is a Fr\'echet space when endowed with the compact-open topology. Then $\mathcal{HC}(H(\C ))$ is $\mathfrak{c}$-lineable, that is, it contains, except for zero, a $\mathfrak{c}$-dimensional vector space, where $\mathfrak{c}$ denotes the cardinality of the continuum.

\vskip 3pt

In any case, it is natural to investigate the lineability of the complementary family $\mathcal{L}(X) \setminus \mathcal{HC}(X)$.
The answer is positive for locally convex spaces and, in a stronger way, for Banach spaces:
%
%
%
%

\begin{proposition} \label{proposition}
\begin{enumerate}
\item[\rm (a)] For every infinite-dimensional locally convex space \(X\), the set \(\mathcal{L}(X) \setminus \mathcal{HC}(X)\) is lineable.
\item[\rm (b)] For every infinite-dimensional Banach space \(X\), the set 
\(\mathcal{L}(X) \setminus \mathcal{HC}(X)\) is spaceable with respect to the operator norm topology.
\end{enumerate}
\end{proposition}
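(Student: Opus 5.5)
For (a), the plan is to use operators of finite rank. Since $X$ is an infinite-dimensional locally convex space, the Hahn--Banach theorem gives a nonzero continuous linear functional $\varphi \in X^*$. Because $X$ is infinite-dimensional, we can pick a linearly independent sequence $(y_n)_{n\ge 1}$ in $X$. For each $n$ set $T_n := \varphi(\cdot)\, y_n$. Each $T_n$ is a continuous rank-one operator, and the $T_n$ are linearly independent, since $\varphi \neq 0$ and the $y_n$ are independent.

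Let $V := \operatorname{span}\{T_n : n \ge 1\}$; it is infinite-dimensional. Every $T \in V \setminus \{0\}$ has finite-dimensional range $F$. For any $x_0$, the orbit satisfies $\mathcal{O}(T;x_0) \subset \{x_0\} \cup F$. The set $\{x_0\}\cup F$ is closed, because finite-dimensional subspaces of a Hausdorff topological vector space are closed. It is also a proper subset of $X$, since $X$ is infinite-dimensional. Hence no orbit is dense, so $T \notin \mathcal{HC}(X)$. (Alternatively, one may invoke that compact operators are never hypercyclic on locally convex spaces, as recalled above from \cite{bonetperis}.) This shows $V \subset (\mathcal{L}(X)\setminus\mathcal{HC}(X)) \cup \{0\}$.

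For (b), I plan to exploit the necessary condition $\|T\|>1$ noted just above. By the same reasoning, it suffices to produce a norm-closed infinite-dimensional subspace of $\mathcal{L}(X)$ consisting, except for $0$, of operators that cannot be hypercyclic. The natural candidate is the norm closure $W$ of the space $V$ from (a), built from a fixed $\varphi \in X^*$ with $\|\varphi\|=1$ and vectors $y_n$. Every element of $W$ is a norm limit of finite-rank operators, hence compact, and compact operators on a Banach space are never hypercyclic by the fact recalled in the introduction. Thus $W \subset \mathcal{K}(X) \cup \{0\}$ up to zero, and $W$ is norm-closed and infinite-dimensional.

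An even simpler choice is to take $W := \{\varphi(\cdot)\, y : y \in X\}$. The map $y \mapsto \varphi(\cdot)\,y$ is an isometry from $X$ into $\mathcal{L}(X)$, because $\|\varphi(\cdot)y\| = \|\varphi\|\,\|y\| = \|y\|$. Hence its image is closed, being an isometric copy of the complete space $X$, and it is infinite-dimensional. Its nonzero members have rank one, so they are not hypercyclic by the orbit argument of (a). The only point needing care is this closedness, which the isometry settles. I therefore expect no real obstacle here; the main subtlety overall is in (a), where local convexity is used exactly to guarantee $X^* \neq \{0\}$.
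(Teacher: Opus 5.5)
Your proof is correct, but it distributes the work differently from the paper.

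The paper takes the whole class $\mathcal{K}(X)$ as its subspace and quotes the Bonet--Peris result that compact operators on a locally convex space are never hypercyclic. It then only checks two things: that $\mathcal{K}(X)$ is infinite-dimensional, using rank-one operators $K_n$ obtained by complementing each line $\operatorname{span}\{x_n\}$ (so a different functional for each $n$), and, for (b), that $\mathcal{K}(X)$ is norm-closed.

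You instead use a single nonzero $\varphi\in X^*$ with varying vectors $y_n$. You also replace Bonet--Peris by the elementary fact that every orbit of a finite-rank operator lies in the closed proper set $\{x_0\}\cup F$. This makes (a) self-contained and slightly more general. Local convexity is used only to produce $\varphi$, so your argument works in any infinite-dimensional Hausdorff topological vector space with $X^*\neq\{0\}$, for instance $\ell_p$ with $0<p<1$.

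In (b), your first variant (the norm closure of $V$) still relies on compact operators not being hypercyclic. Your second variant, the isometric copy $\{\varphi(\cdot)\,y : y\in X\}$ of $X$, avoids that fact altogether. Moreover, its nonzero elements have rank one, so the argument in the proof of Theorem~\ref{Theorem-main-1} shows they are also non-cyclic, non-injective and not of dense range. This single subspace therefore already gives the norm-spaceability assertions of Corollary~\ref{corollary} directly.

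What the paper's route buys in exchange is a canonical and much larger subspace, $\mathcal{K}(X)$. Finally, your announced plan to exploit $\|T\|>1$ is never used and can simply be dropped.
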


\begin{proof}
For (a) and (b), we have by a result mentioned in section 1 (\cite[Proposition 8]{bonetperis}) that
\[
\mathcal{K}(X) \subset \mathcal{L}(X) \setminus \mathcal{HC}(X).
\]
Recall that $\mathcal{K}(X)$ is a vector subspace of $\mathcal{L}(X)$. Since it is closed in $\mathcal{L}(X)$ for the operator norm topology if $X$ is Banach, it suffices to show that $\mathcal{K}(X)$ contains infinitely many linearly independent members whenever $X$ is locally convex. With this aim, take a linearly independent infinite set $\{x_1,x_2,x_3, \dots \} \subset X$.
Since each linear ${\rm span} \left(\{x_n\}\right)$ ($n \in \N$) is finite dimensional, we have (see, e.g., \cite[p.~106]{RudinAF}) that there exists a closed subspace $M_n$ of $X$ such that $X = {\rm span} \{x_n\} \oplus M_n$ (topological direct sum). For each $x \in X$, we get a decomposition $x = \lambda x_n + y_n$ with unique $\lambda \in \K$ and $y_n \in M_n$, and the projection $x \in X \mapsto \lambda \in \K$ is continuous (that is, it belongs to $X^*$). Then the mappings $K_n : x \in X \mapsto \lambda x_n \in X$
($n \in \N$) are well defined, linear, continuous, linearly independent (because the $x_n$'s are) and compact (because their ranges are finite dimensional). The proof is concluded.
\end{proof}


The next question arises naturally.

\begin{problem}
For every infinite-dimensional Banach space \(X\), is the set \(\mathcal{L}(X) \setminus \mathcal{HC}(X)\) dense-lineable (with respect to the operator norm topology)?
\end{problem}

The second part of the last proposition will be improved in section 3 (see Corollary \ref{corollary}). In the emblematic case $X = \ell_p$, the result can be reinforced with the obtaining of isometric images, as it will be seen in section 4.
Notice that the approach of the preceding proof cannot be applied to $\mathcal{L}(X) \setminus \mathcal{SC}(X)$ (hence, neither to $\mathcal{L}(X) \setminus \mathcal{CYC}(X)$), because a supercyclic operator may well be compact on a Banach space, as for instance the operator provided by Herzog in \cite{Herzog}.

\vskip 3pt

The following result provides that pointwise lineability never occurs for the set \(\mathcal{L}(X) \setminus \mathcal{HC}(X)\), in the context of infinite dimensional separable Banach spaces.
We recall that a subset $C$ of a vector space is a cone if $sx \in C$ for every $s > 0$ and $x \in C$.

\begin{proposition} \label{no-pointwise}
For every infinite-dimensional separable Banach space \(X\), the set \(\mathcal{L}(X) \setminus \mathcal{HC}(X)\) fails to contain a cone containing some vector of this set, and hence it cannot be pointwise lineable.
\end{proposition}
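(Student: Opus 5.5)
The statement should be read as follows: there is at least one operator $T\in\mathcal{L}(X)\setminus\mathcal{HC}(X)$ such that no cone containing $T$ is contained in $\mathcal{L}(X)\setminus\mathcal{HC}(X)$. Every vector subspace is a cone, so the failure of pointwise lineability follows at once. Pointwise lineability would require, for every $T$ in the set, an infinite-dimensional subspace $V_T$ with $T\in V_T\subset(\mathcal{L}(X)\setminus\mathcal{HC}(X))\cup\{0\}$. Such a $V_T$ would be a cone containing $T$ inside the set, and for the special $T$ below this is impossible.

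To build $T$, I will use the second property listed in Section~1 (the Ansari--Bernal theorem): since $X$ is an infinite-dimensional separable Banach space, there is some $H\in\mathcal{HC}(X)$. As recalled before Proposition~\ref{proposition}, every hypercyclic operator on a normed space satisfies $\|H\|>1$, because an operator of norm at most one has bounded orbits, and bounded orbits cannot be dense. Define
\[
T:=\frac{1}{2\|H\|}\,H .
\]
Then $\|T\|=1/2\le 1$, so every orbit of $T$ is bounded and $T\notin\mathcal{HC}(X)$. Thus $T$ belongs to the set in question.

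Now let $C$ be any cone with $T\in C$. Taking $s:=2\|H\|>0$, the definition of a cone gives $sT=H\in C$. Since $H$ is hypercyclic, $C\not\subset\mathcal{L}(X)\setminus\mathcal{HC}(X)$. Applying this to $C=V_T$ from the first paragraph gives the contradiction, and both claims of the statement follow.

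I do not expect a serious obstacle. The only step needing care is how the statement is read: it must be read as the existence of a single $T$ whose every enclosing cone meets $\mathcal{HC}(X)$. It cannot be read as a claim about every non-hypercyclic operator, since for instance $T=0$, or any $T$ with $\sigma_p(T^*)\neq\varnothing$, sits in a line of non-hypercyclic operators. The nontrivial input is entirely the existence of a hypercyclic operator on every separable infinite-dimensional Banach space, which the paper takes as known.
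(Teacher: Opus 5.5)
Your proof is correct and takes the same approach as the paper: scale a hypercyclic operator down to a non-hypercyclic contraction (the paper uses $S/(1+\|S\|)$, you use $H/(2\|H\|)$), then note that any cone through it contains the hypercyclic operator again as a positive multiple. Your explicit constant $s=2\|H\|$ is stated correctly, whereas the paper's constant $\alpha = 1/(1+\|S\|)$ should read $1+\|S\|$.
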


\begin{proof}
Since $X$ is separable, we can select an operator $S \in \mathcal{HC}(X)$. Then $T := {S \over 1 + \|S\|}$ is a contraction, hence
$T \in \mathcal{L}(X) \setminus \mathcal{HC}(X) =: \mathcal{A}$. Let $\alpha := {1 \over 1 + \|S\|} > 0$.
Thus, $S = \alpha T \not\in \mathcal{A}$, which proves the assertion.
\end{proof}

Now, with the target set on algebraic-topological properties of $\mathcal{L}(X) \setminus \mathcal{HC}(X)$ with respect to the SOT-topology, we also can take advantage of the class of compact operators. In fact, by using the subclass of finite rank operators, the first part of Proposition \ref{proposition} will be refined in section 3 so as to obtain dense lineability (see Theorem \ref{Theorem-main-1}).

\begin{remark}
{\rm Before facing more general situations, let us show how in the case of an infinite-dimensional real Hilbert space $H$, self-adjoint operators provide
an easy proof of the SOT-spaceability of
\(\mathcal{L}(H) \setminus \mathcal{HC}(H)\).
For each operator $T \in \mathcal{L}(H)$, denote by \,$T^*$ its adjoint operator.
Consider the class
\[
SA(H) := \{ T \in \mathcal{L}(H) : T=T^{*}\}
\]
of self-adjoint operators.
Plainly, $SA(H)$ is a vector subspace of $\mathcal{L}(H)$. Moreover, it has infinite dimension
as the operators $T_n(x) := e_n^*(x)e_n$ $(n \in \N )$ (where $(e_n)$ is an orthonormal system
and $(e_n^*)$ is the sequence of its corresponding coordinate functionals) are linearly independent members of $SA(H)$.
Since no self-adjoint operator can be hypercyclic \cite[Corollary 5.31]{Grosse}, we get
\,$SA(H) \subset \mathcal{L}(H) \setminus \mathcal{HC}(H)$.
Then it is enough to show that $SA(H)$ is SOT-closed in $\mathcal{L}(H)$.
For this, let $\left(T_\alpha \right)_{\alpha \in I}$ be a net in $SA(H)$ such that $T_\alpha \stackrel{SOT}\longrightarrow T \in \mathcal{L}(X)$.
We have to prove that $T$ is self-adjoint. We have:
\[
\langle Tx,y \rangle
= \langle \lim_{\alpha} T_\alpha x,y \rangle
= \lim_{\alpha} \langle T_\alpha x , y \rangle
= \lim_{\alpha} \langle x, T_\alpha y \rangle
= \langle x, \lim_{\alpha}T_\alpha(y)
= \langle x,Ty \rangle,
\]
which proves our claim.}
\end{remark}

\section{Dense lineability and spaceability of operators not enjoying cyclicity or range properties}  \label{Sec3}

\quad In this main section, it will be seen how under rather natural assumptions on the space $X$ it is possible to obtain SOT-large spaces of operators not enjoying several dynamical or range properties.

\vskip 3pt

In order to motivate the introduction of properties of univalence and dense range, let us recall some recent results about the linear size of ope\-ra\-tors satisfying any of these properties:
\begin{enumerate}
\item[$\bullet$] Let $X$ be a separable infinite-dimensional Banach space. Then $\mathcal{INJ}(X)$ is algebrable \cite[Theorem 3.7]{bernalopen}.

\item[$\bullet$] If $X$ is a topological vector space admitting a Schauder basis, then $\mathcal{INJ}(X)$ is lineable \cite[Theorem 3.1]{aronbjms}.

\item[$\bullet$] If $c_0$ denotes the Banach space of scalar sequences tending to $0$, endowed with the supremum norm, then $\mathcal{INJ}(c_0)$ is spaceable in $\mathcal{L}(c_0)$ under the operator norm topology \cite[Corollary 3.4]{aronbjms}. More generally, if $Y \ne \{0\}$ is a Banach space and $X \subset Y^{\N}$ is a standard Banach sequence space, then $\mathcal{INJ}(X)$ is norm-spaceable in $\mathcal{L}(X)$ \cite[Theorem 1.3]{diniz}.

%
%
%

\item[$\bullet$] If $p \in [1,+\infty ]$ then the set $\mathcal{SRJ}(\ell_p)$ (hence $\mathcal{DR}(\ell_p)$) is spaceable in $\mathcal{L}(\ell_p)$ under the operator norm topology \cite[Theorem 4.1]{aronbjms}. More generally, if $Y \ne \{0\}$ is a Banach space and $X \subset Y^{\N}$ is a standard Banach sequence space such that the set $c_{00}(Y)$ of finite sequences is dense in $X$, then $\mathcal{SRJ}(X)$ (hence $\mathcal{DR}(X)$) is norm-spaceable in $\mathcal{L}(X)$ \cite[Theorem 1.2]{diniz}. See also an extension of this result to a class of Banach function spaces called densely $(\Omega ,Y)$-spaces in
\cite[Theorem 2.5]{bagheri}.
\end{enumerate}
Then it is interesting to study the linear size of the {\it complements} of these classes.
In fact, we will be able to synchronize the lack of all three properties: cyclicity, dense range, and injectivity.

\begin{theorem} \label{Theorem-main-1}
Let \,$X$ be an infinite-dimensional 
locally convex space. Then the set
$$\mathcal{L}(X) \setminus (\mathcal{DR}(X) \cup \mathcal{CYC}(X) \cup \mathcal{INJ}(X))$$
is SOT-dense-lineable in $\mathcal{L}(X)$.
\end{theorem}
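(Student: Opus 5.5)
The plan is to take as the dense subspace the space $\mathcal{F}(X)$ of all continuous finite-rank operators on $X$, that is, operators of the form $x \mapsto \sum_{j=1}^n f_j(x) y_j$ with $f_j \in X^*$ and $y_j \in X$. It is clearly a vector subspace of $\mathcal{L}(X)$. It is infinite-dimensional, as in the proof of Proposition~\ref{proposition}: the rank-one operators $K_n$ built there are linearly independent. So there are two things to prove. First, every member of $\mathcal{F}(X)$ lies outside $\mathcal{DR}(X) \cup \mathcal{CYC}(X) \cup \mathcal{INJ}(X)$. Second, $\mathcal{F}(X)$ is SOT-dense in $\mathcal{L}(X)$.

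For the first part, I fix $T \in \mathcal{F}(X)$ and write $R := T(X)$, which is finite-dimensional.
\begin{enumerate}
\item[$\bullet$] Since $X$ is Hausdorff, every finite-dimensional subspace of $X$ is closed. As $X$ is infinite-dimensional, $R$ is a proper closed subspace, so $T \notin \mathcal{DR}(X)$.
\item[$\bullet$] The linear map $T$ sends the infinite-dimensional space $X$ into the finite-dimensional space $R$. Hence $\ker T \neq \{0\}$ and $T \notin \mathcal{INJ}(X)$.
\item[$\bullet$] For any $x_0 \in X$ we have ${\rm span}(\mathcal{O}(T;x_0)) \subset {\rm span}\{x_0\} + R$. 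This subspace is finite-dimensional, hence closed and proper, so it is not dense and $T \notin \mathcal{CYC}(X)$.
\end{enumerate}
Thus $\mathcal{F}(X) \subset \bigl(\mathcal{L}(X) \setminus (\mathcal{DR}(X) \cup \mathcal{CYC}(X) \cup \mathcal{INJ}(X))\bigr) \cup \{0\}$.

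For SOT-density, I take $T \in \mathcal{L}(X)$ and a basic SOT-neighbourhood of $T$, determined by vectors $x_1, \dots, x_n \in X$ and a $0$-neighbourhood $U$ in $X$. I replace $\{x_1,\dots,x_n\}$ by a basis $\{u_1,\dots,u_m\}$ of their span. Then it suffices to find $F \in \mathcal{F}(X)$ with $F u_k = T u_k$ for all $k$, because by linearity $F$ then agrees with $T$ on every $x_i$. By the Hahn--Banach theorem, exactly as in the proof of Proposition~\ref{proposition} (complementing finite-dimensional subspaces), there exist $f_1,\dots,f_m \in X^*$ with $f_j(u_k)=\delta_{jk}$. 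I set
\[
F(x) := \sum_{j=1}^m f_j(x)\, T u_j .
\]
This $F$ is continuous, has finite rank, and satisfies $F u_k = T u_k$. So $F$ lies in every SOT-neighbourhood of $T$ determined by these vectors, which gives density.

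The only step needing care is the existence of the biorthogonal functionals. This is where local convexity is essential, since Hahn--Banach can fail without it (for instance in $L_p$ with $0<p<1$, where $X^*=\{0\}$). Everything else is elementary linear algebra together with closedness of finite-dimensional subspaces in Hausdorff spaces.
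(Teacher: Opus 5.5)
Your proposal is correct and follows essentially the same route as the paper. The finite-rank operators serve as the dense subspace, and failure of dense range, injectivity and cyclicity all come from finite-dimensionality of $T(X)$ and of ${\rm span}\{x_0\}+T(X)$, together with closedness of finite-dimensional subspaces. SOT-density comes from the operator $\sum_j f_j(\cdot)\,Tu_j$, built from Hahn--Banach biorthogonal functionals on a basis of ${\rm span}\{x_1,\dots,x_n\}$; your use of a general $0$-neighbourhood and your explicit check of infinite-dimensionality are slightly more careful than the paper's write-up.
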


\begin{proof}
Consider the family $\mathcal{FR}(X)$ of finite rank operators, that is, an operator $T \in \mathcal{L}(X)$ belongs to $\mathcal{FR}(X)$
if and only if ${\rm dim} (T(X)) < \infty$. Since $\mathcal{FR}(X)$ is itself a vector space, the proof would be finished as soon as we prove the following four properties:
\begin{enumerate}
\item[$\bullet$] Every $T \in \mathcal{FR}(X)$ has not dense range.
\item[$\bullet$] Every $T \in \mathcal{FR}(X)$ is not injective.
\item[$\bullet$] Every $T \in \mathcal{FR}(X)$ is not cyclic.
\item[$\bullet$] $\mathcal{FR}(X)$ is SOT-dense in $\mathcal{L}(X)$.
\end{enumerate}

Assume that $T \in \mathcal{FR}(X)$. Since ${\rm dim} (T(X)) < \infty$, the image $T(X)$ is closed in $X$, and so $\overline{T(X)} = T(X) \ne X$ because $X$ is infinite-dimensional but $T(X)$ is not. Thus, $T$ has not dense range. If $T$ were injective then ${\rm dim}(T(X)) = {\rm dim}(X)$, which again is false. If $T$ were cyclic,
there would exist a vector $x_0 \in X$ such that $L := {\rm span}(\mathcal{O}(T;x_0))$ is dense in $X$. But $L \subset {\rm span} (\{x_0\} \cup T(X))$.
Therefore, ${\rm dim}(L) \le 1 + {\rm dim}(T(X)) < \infty$, and so $\overline{L} = L \ne X$ (because, again, $X$ is infinite-dimensional), that is a contradiction.

\vskip 3pt

Thus, our unique task is to prove the SOT-denseness of $\mathcal{FR}(X)$ in $\mathcal{L}(X)$. With this aim, fix a $T_0 \in \mathcal{L}(X)$ as well as an $\varepsilon > 0$ and
finitely many vectors $x_1, x_2, \dots, x_N \in X$. Consider the corresponding SOT-neighbourhood $U$ of $T_0$, given by
$$U = \{S \in \mathcal{L}(X): \, |S(x_j) - T_0(x_j)| < \varepsilon \hbox{ \ for all \ } j = \{1,2, \dots, N\} \}.$$
Without loss of generality, we can assume that some $x_j \ne 0$.
Select a ma\-xi\-mal linearly free system $\{y_1,y_2, \dots ,y_p\} \subset \{  x_1, x_2, \dots, x_N \}$. Then every $x_j$ is a finite linear combination of the vectors
$y_k$'s. Note that if $S \in \mathcal{L}(X)$ satisfies $y_k \in {\rm ker} (S - T_0)$ for all $k \in \{1, \dots ,p\}$, then $S \in U$.
Consequently, it suffices to exhibit a finite rank operator \,$S$ \,satisfying this condition. Given $k \in \{1, \dots, p\}$, with the help of
the Hahn-Banach theorem we can obtain a functional $\varphi_k \in X^*$ such that $\varphi_k (y_k) = 1$ and $\varphi_k(y_i) = 0$ ($i \ne k$).
Then the operator
$$
S := \sum_{i=1}^{p} \varphi_i (\cdot ) \cdot T_0(y_i) \in \mathcal{L}(X)
$$
satisfies for each $k \in \{1, \dots, N\}$
that $(S - T_0)(y_k) = T_0(y_k) - T_0(y_k) = 0$. This concludes the proof.
\end{proof}

By considering an appropriate countable subfamily of $\mathcal{FR}(X)$, we will get spaceability in the Fréchet space case. Prior to establish our spaceability result (Theorem \ref{Theorem-main-2}), we recall the following crucial result on existence of basic sequences, that is due to S.~Mazur in the Banach case (see \cite[p.~39]{Diestel}), and to Bessaga, Kadec and Pe\l czy\'nski in the general Fréchet case (see \cite[pp.~140--143]{kamtangupta}). Recall that a sequence $(u_n)$ in a topological vector space $X$ is called a basic sequence if it is a Schauder basis for $Y := \overline{\rm span} (\{u_n : \, n \in \N \})$, that is, if for every $u \in Y$ there exists a unique sequence $(a_n) \in \K^{\N}$ such that $u = \sum_{n=1}^{\infty} a_n u_n = \lim_{n \to \infty} \sum_{k=1}^{n} a_k u_k$ and, in addition, the coordinate functionals
$$
e_n^* : u \in Y \longmapsto a_n \in \K \quad (n \in \N)
$$
are continuous (i.e.~$e_n \in Y^*$).

\begin{theorem} \label{MazurBessagaKadecPelczynski}
Every infinite-dimensional Fr\'echet space contains a basic sequence.
\end{theorem}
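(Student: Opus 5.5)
We follow Mazur's classical argument, adapted to a countable family of seminorms, and split into two cases according to whether $X$ carries a continuous norm on a large subspace. Fix an increasing sequence of continuous seminorms $p_1 \le p_2 \le \cdots$ defining the topology of $X$.

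\textbf{Step 1 (the dichotomy).} Suppose first that some $p_k$ has a finite-dimensional kernel. Then, as in the proof of Proposition~\ref{proposition}, $\ker p_k$ has a closed complement $Z$. This $Z$ is an infinite-dimensional Fr\'echet space on which $p_k$, and hence every $p_j$ with $j \ge k$, is a norm. It then suffices to find a basic sequence in $Z$. Otherwise every $\ker p_k$ is an infinite-dimensional closed subspace, and we treat that case separately.

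\textbf{Step 2 (case without continuous norm).} Here we construct a copy of $\omega = \K^{\N}$ inductively. Choose $x_k \in \ker p_k$ linearly independent of $x_1, \dots, x_{k-1}$; this is possible since $\ker p_k$ is infinite-dimensional. By Hahn--Banach, choose $f_k \in X^*$ with $f_k(x_i) = 0$ for $i < k$ and $f_k(x_k) = 1$.
\begin{itemize}
\item Since $p_j(x_k) = 0$ for $k \ge j$, every series $\sum a_k x_k$ converges, for arbitrary scalars.
\item The matrix $(f_i(x_k))$ is lower triangular with unit diagonal. Hence the coefficients $a_k$ can be recovered recursively and continuously from the values $f_1(u), \dots, f_k(u)$.
\end{itemize}
So $(a_k) \mapsto \sum a_k x_k$ is a continuous linear injection $\omega \to X$ with continuous inverse on its image. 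Its image is complete, hence closed, and $(x_k)$ is a basic sequence there.

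\textbf{Step 3 (Mazur's lemma with seminorms).} Now assume the $p_j$ are norms on $Z$ for $j \ge k$; reindex so that all of them are norms. The key lemma is as follows.

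\emph{Lemma.} Given a finite-dimensional subspace $F \subset Z$, an integer $n$ and $\varepsilon > 0$, there is $x \in Z \setminus F$ such that
\[
p_j(y) \le (1+\varepsilon)\, p_j(y + \lambda x) \quad \text{for all } y \in F,\ \lambda \in \K,\ j \le n.
\]

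\emph{Proof plan.} For each $j \le n$, the $p_j$-unit sphere of $F$ is compact, so we take a finite $\delta$-net of it. For each net point, Hahn--Banach gives a functional $\varphi$ with $|\varphi| \le p_j$ and $\varphi$ equal to $p_j$ at that point. Let $x$ be any vector in the intersection of the kernels of these finitely many functionals, chosen outside $F$; this intersection has finite codimension, hence is infinite-dimensional. The usual estimate then gives the inequality with $1+\varepsilon$ in place of $(1-2\delta)^{-1}$ or similar.

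\textbf{Step 4 (building the basic sequence).} Pick $\varepsilon_n > 0$ with $\prod_n (1+\varepsilon_n) < \infty$. Inductively choose $u_{n+1}$ by the Lemma with $F = \operatorname{span}\{u_1, \dots, u_n\}$, the integer $n$, and $\varepsilon_n$. Let $P_m$ denote the partial-sum projections on $\operatorname{span}\{u_i\}$. Then:
\begin{itemize}
\item For $m \ge j$, iterating the Lemma gives $p_j(P_m u) \le C_j\, p_j(u)$ with $C_j = \prod_{n \ge j}(1+\varepsilon_n)$.
\item For the finitely many $m < j$, $P_m$ is a projection onto a finite-dimensional space on which $p_j$ is a norm. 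It is therefore $p_j$-bounded, because $p_j$ dominates $p_m$-type estimates via the projections already controlled. If this causes trouble, we instead bound $p_j(P_m u) \le p_j(P_j u)$ plus finite-dimensional constants.
\end{itemize}
Thus the family $(P_m)$ is equicontinuous on $\operatorname{span}\{u_i\}$. By the Grunblum-type criterion, it extends to $Y = \overline{\operatorname{span}}\{u_i\}$ and makes $(u_n)$ a Schauder basis of $Y$ with continuous coordinate functionals.

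\textbf{Main obstacle.} We expect the hardest part to be the uniformity in Steps 3--4. The Lemma can only control finitely many seminorms at each stage, so the equicontinuity of the projections for small $m$ relative to $p_j$ must be handled separately using finite-dimensionality. This is where the diagonal choice, controlling $p_1, \dots, p_n$ at stage $n$, is essential.
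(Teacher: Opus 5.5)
\textbf{Overall.} Steps 1, 3 and 4 are fine. You split into ``no continuous norm'' versus ``a continuous norm on a finite-codimensional subspace'', and at stage $n$ you run Mazur's net-and-functionals argument on $p_1,\dots,p_n$. This is the standard Bessaga--Pe\l czy\'nski route; the paper gives no proof and only cites Mazur and Bessaga--Kadec--Pe\l czy\'nski. In Step 4 the case $m<j$ is cleanest written as
$p_j(P_mu)\le D_{m,j}\,p_m(P_mu)\le D_{m,j}C_m\,p_m(u)\le D_{m,j}C_m\,p_j(u)$.
This uses that $p_j$ and $p_m$ are equivalent norms on $\operatorname{span}\{u_1,\dots,u_m\}$.

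\textbf{The gap is in Step 2.} Your Hahn--Banach choice gives $f_i(x_k)=0$ only for $k<i$. So for $u=\sum_k a_kx_k$ you get $f_i(u)=a_i+\sum_{k>i}a_kf_i(x_k)$. Each $f_i(u)$ involves $a_i$ together with the \emph{later} coefficients, so nothing can be recovered recursively from $f_1(u),\dots,f_i(u)$.

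The construction can genuinely fail. Take $X=\omega$ with $p_k(u)=\max_{i\le k}|u_i|$, and set $x_1=(0,0,-1,-1,-1,\dots)$ and $x_k=e_{k+1}$ for $k\ge2$.
\begin{itemize}
\item Each $x_k$ lies in $\ker p_k$ and is linearly independent of its predecessors.
\item Admissible $f_k$ exist, e.g.\ $f_1(u)=-u_3$.
\item Yet $\sum_kx_k=0$, so $(a_k)\mapsto\sum_ka_kx_k$ is not injective and $(x_k)$ is not basic.
\end{itemize}

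\textbf{The fix.} The missing idea is to interleave the choices so that the system becomes biorthogonal.
\begin{itemize}
\item Each $f_i\in X^*$ satisfies $|f_i|\le C_i\,p_{m_i}$ for some $m_i$.
\item So choose $x_k\in\ker p_M\setminus\operatorname{span}\{x_1,\dots,x_{k-1}\}$ with $M\ge\max\{k,m_1,\dots,m_{k-1}\}$. Then $f_i(x_k)=0$ for $i<k$ automatically.
\item Together with your choice of $f_k$, this gives $f_i(x_k)=\delta_{ik}$.
\end{itemize}
Now $(a_k)\mapsto\sum_ka_kx_k$ is continuous, because $p_j(x_k)=0$ for $k\ge j$. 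The map $u\mapsto(f_k(u))_k$ is a continuous inverse of it. Hence the image is a closed copy of $\omega$, and $(x_k)$ is basic.
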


\begin{theorem} \label{Theorem-main-2}
Let \,$X$ be an infinite-dimensional 
Fr\'echet space. Then the set
$$\mathcal{L}(X) \setminus (\mathcal{DR}(X) \cup \mathcal{CYC}(X) \cup \mathcal{INJ}(X))$$
is SOT-spaceable in $\mathcal{L}(X)$.
\end{theorem}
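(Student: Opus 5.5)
The plan is to avoid the full SOT-closure of $\mathcal{FR}(X)$, which is all of $\mathcal{L}(X)$ by the proof of Theorem~\ref{Theorem-main-1}. Instead I will exhibit an explicit SOT-closed, infinite-dimensional family all of whose nonzero members have rank one. Since $X$ is a Fr\'echet space, it is locally convex and Hausdorff, so the Hahn--Banach theorem gives a nonzero functional $\varphi \in X^*$. Fix $x_0 \in X$ with $\varphi(x_0) = 1$. For each $v \in X$ define the operator $T_v := \varphi(\cdot)\, v \in \mathcal{L}(X)$, and set
$$
\mathcal{V} := \{T_v : \, v \in X\}.
$$
The map $v \mapsto T_v$ is linear, and it is injective because $T_v(x_0) = v$. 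Hence $\mathcal{V}$ is a vector subspace of $\mathcal{L}(X)$ linearly isomorphic to $X$, so it is infinite-dimensional. If one prefers a countable generating family in the spirit of the remark preceding Theorem~\ref{MazurBessagaKadecPelczynski}, one can replace $X$ by $Z := \overline{\rm span}(\{u_n\})$ for a basic sequence $(u_n)$ given by Theorem~\ref{MazurBessagaKadecPelczynski}, and use $\{T_v : v \in Z\}$. The argument is identical.

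Each nonzero $T_v \in \mathcal{V}$ has one-dimensional range ${\rm span}\{v\}$, so $T_v \in \mathcal{FR}(X)$. The first part of the proof of Theorem~\ref{Theorem-main-1} already shows that every finite rank operator on an infinite-dimensional space fails to have dense range, fails to be injective, and is not cyclic. Consequently
$$
\mathcal{V} \setminus \{0\} \subset \mathcal{L}(X) \setminus (\mathcal{DR}(X) \cup \mathcal{CYC}(X) \cup \mathcal{INJ}(X)).
$$

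The only step with some content, and the one I would check most carefully, is the SOT-closedness of $\mathcal{V}$. Let $(T_{v_\alpha})_{\alpha \in I}$ be a net in $\mathcal{V}$ with $T_{v_\alpha} \stackrel{SOT}\longrightarrow T \in \mathcal{L}(X)$.
\begin{enumerate}
\item[(i)] Evaluating at $x_0$ gives $v_\alpha = T_{v_\alpha}(x_0) \to T(x_0) =: v$ in $X$.
\item[(ii)] For an arbitrary $x \in X$, continuity of scalar multiplication gives $T_{v_\alpha}(x) = \varphi(x) v_\alpha \to \varphi(x) v$.
\item[(iii)] Since $X$ is Hausdorff, limits are unique, so $T(x) = \varphi(x) v$ for every $x$, that is, $T = T_v \in \mathcal{V}$.
\end{enumerate}
In the variant with $Z$, we additionally need $v \in Z$, and this holds because $Z$ is closed.

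Thus $\mathcal{V}$ is an SOT-closed infinite-dimensional subspace contained, except for zero, in the required set, which proves Theorem~\ref{Theorem-main-2}. The argument only uses that $X$ is an infinite-dimensional Hausdorff locally convex space, and the Fr\'echet hypothesis is needed only if one insists on the basic-sequence formulation.
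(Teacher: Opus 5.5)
Your proof is correct, and it takes a different and more economical route than the paper.

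\textbf{The paper's route.} It takes a basic sequence $(x_n)$ (Theorem~\ref{MazurBessagaKadecPelczynski}) and sets $\mathcal{M}=\overline{{\rm span}(\{K_n: n\ge 3\})}^{\rm SOT}$ with $K_n=e_n^*(\cdot)\,x_n$. Since $\mathcal{M}$ may contain operators of infinite rank (for instance convergent sums $\sum_n a_n K_n$), the paper cannot simply appeal to finite rank. Instead it reserves the first two coordinates: every $T\in\mathcal{M}$ satisfies $T(x_1)=0$ and $e_1^*\circ T=0=e_2^*\circ T$, because these identities pass to SOT-limits. Non-injectivity, non-density of the range and non-cyclicity are then derived by hand from these identities.

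\textbf{Your route.} You choose a subspace that is already SOT-closed and consists of rank-one operators. The closure analysis reduces to your short net argument: evaluating at $x_0$ with $\varphi(x_0)=1$ recovers $v$. The finite-rank part of Theorem~\ref{Theorem-main-1} then does the rest.

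\textbf{What your approach buys.} It needs neither the Bessaga--Kadec--Pe\l czy\'nski theorem nor completeness or metrizability. A Hausdorff topological vector space with $X^*\neq\{0\}$ suffices, so $\ell_p$ with $0<p<1$ is covered as well. It also shows that $v\mapsto T_v$ is a topological isomorphism of $X$ onto $(\mathcal{V},{\rm SOT})$. In the Banach case, taking $\|\varphi\|=1$, this map is an isometry into $\mathcal{L}(X)$ for the operator norm. The same rank-one family also yields Theorem~\ref{theo5.2} at once: $\mathcal{O}_W(T_v)$ lies in the finite-dimensional, hence closed and proper, subspace $W+{\rm span}\{v\}$.

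\textbf{What the paper's approach buys.} The reserved-coordinates technique works beyond finite rank. It is the template reused in Theorem~\ref{theo4.1}, where the operators must be injective and hence of infinite rank, so your shortcut is unavailable there.
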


\begin{proof}
According to Theorem \ref{MazurBessagaKadecPelczynski}, there exists a basic sequence \,$(x_n)_{n=1}^\infty$ \,in \,$X$. For each $n \in \mathbb{N}$, let us
denote by $e^*_n$ the coordinate functionals corresponding to this basic sequence, continuously extended to $X$ via the Hahn-Banach theorem.
Define the mappings
$$
K_n : x \in X \longmapsto e^*_n(x) \cdot x_n \in X \quad (n \in \N).
$$

It is plain, on the one hand, that every $K_n$ is linear and continuous, that is, $K_n \in \mathcal{L}(X)$. In fact, they are finite rank operators.
On the other hand, it is evident that
$$
\mathcal{M} :=  \overline{ {\rm span} (\{K_n: n \ge 3 \} ) }^{\rm SOT}
$$
is a SOT-closed vector subspace of $\mathcal{L}(X)$. Moreover, from the shape of $K_n$'s and the fact that any basic sequence is linearly independent,
one derives that $\mathcal{M}$ is infinite-dimensional.

\vskip 3pt

Therefore, we would be done as soon as we prove the following properties:
\begin{enumerate}
\item[$\bullet$] Every $T \in \mathcal{M}$ is not injective.
\item[$\bullet$] Every $T \in \mathcal{M}$ has not dense range.
\item[$\bullet$] Every $T \in \mathcal{M}$ is not cyclic.
\end{enumerate}

\vskip 3pt

Let \,$T \in \mathcal{M}$. Then there is a net \,$(T_\alpha )_{\alpha \in I} \subset {\rm span} (\{K_n: n \ge 3 \} )$ \,such that \,$T_\alpha \xrightarrow{\alpha \in I} T$
\,in the SOT-topology, that is, $T_\alpha (x) \xrightarrow{\alpha \in I} T(x)$ \,for every \,$x \in X$. Now, $x_1 \ne 0$
\,but \,$K_n(x_1) = e^*_n(x_1) \cdot x_n = 0 \cdot x_n = 0$ \,for all $n \ge 3$, and so \,$T_\alpha (x_1) = 0$ \,for all \,$\alpha \in I$.
It follows that \,$T(x_1) = \lim_{\alpha \in I} T_\alpha (x_1) = 0$. Thus, $T$ is not injective.

\vskip 3pt

Assume, by way of contradiction, that \,$x_1 \in \overline{T(X)}$. Then there would exist a sequence \,$(y_j) \subset X$ \,such that \,$T(y_j) \to x_1$
\,as \,$j \to \infty$. Observe that \,$e_1^*(K_n(x)) = e_n^*(x) \cdot e_1^*(x_n) = 0$ \,for all \,$x \in X$ \,and all \,$n \ge 3$. Therefore, by linearity, we obtain
\,$e_1^* (T_\alpha (x)) = 0$ \,for all \,$\alpha \in I$ \,and all \,$x \in X$. Now, the continuity of \,$e_1^*$ \,yields
$$
e_1^*(T(y_j)) = \lim_{\alpha \in I} e_1^* (T_\alpha (y_j)) = \lim_{\alpha \in I} 0 = 0 \hbox{ \ for all \ } j \in \N .
$$
It follows, again by the continuity of $e_1^*$, that \,$0 = e_1^*(T(y_j)) \to e_1^*(x_1) = 1$ \,as \,$j \to \infty$, which is absurd. Thus, $T$ has not dense range.

\vskip 3pt

Finally, let us suppose that \,$T$ \,is cyclic. As in the preceding paragraph, we can obtain that \,$e_1^* \circ T_\alpha = 0 = e_2^* \circ T_\alpha$ \,for all \,$\alpha \in I$.
Then the continuity of \,$e_1^*, e_2^*$ \,together with the fact \,$T_\alpha (x) \xrightarrow{\alpha \in I} T(x)$ \,for all \,$x \in X$
\,leads us to \,$e_1^* \circ T = 0 = e_2^* \circ T$ \,and, consequently,
\begin{equation}\label{Eq composition with T^n}
 e_1^* \circ T^n = 0 = e_2^* \circ T^n \hbox{ \ for all \ } n \in \N .
\end{equation}

The assumption of cyclicity implies the existence of
a vector $u \in X$ such that,
for every prescribed vector $v \in X$, there exist a sequence \,$(w_j)_{j=1}^\infty \subset {\rm span}(\mathcal{O}(T;u))$ \,satisfying
\,$w_j \to v$ \,as \,$j \to \infty$.
Every $w_j$ has the form
$$
w_j = \mu_j u + \sum_{n=1}^{m(j)} c_{n,j} T^n u
$$
for certain scalars \,$\mu_j$ and $c_{n,j}$ ($n \in \{1, \dots, m(j)\}, \, j \in \N$).
Let \,$z_j := \sum_{n=1}^{m(j)} c_{n,j} T^n u$. It follows from \eqref{Eq composition with T^n} by linearity that \,$e_1^*(z_j) = 0 = e_2^*(z_j)$, and so
$e_i^*(w_j) = \mu_j e_i^*(u)$ ($i=1,2$) \,for all \,$j \in \N$.

\vskip 3pt

Now, the continuity of \,$e_1^*$ \,and \,$e_2^*$ \,entails
$$
\mu_j e_i^*(u) = e_i^* (w_j) \longrightarrow e_i^*(v) \ \,(i=1,2) \hbox{ \ as \ } j \to \infty.
$$
If $e_1^*(u) = 0$, by taking $v = x_1$ we would have \,$0 \to e_1^*(x_1) = 1$ \,as \,$j \to \infty$, which is absurd.
If $e_1^*(u) \ne 0$, the choice $v = x_2$ would yield \,$\mu_j e_1^*(u) \to e_1^*(x_2) = 0$, so \,$\mu_j \to 0$.
But also \,$\mu_j e_2^*(u) \to e_2^*(x_2) = 1$, which contradicts \,$\mu_j \to 0$.
In any case, we reach a contradiction, and so \,$T$ \,cannot be cyclic. The proof is finished.
\end{proof}

The following consequence of Theorem \ref{Theorem-main-1} is extracted from the inclusions \eqref{equation-inclusions} and from the fact that, if $X$ is a Banach space, then the SOT topology is weaker than the operator norm topology.

\begin{corollary} \label{corollary}
Let \,$X$ be an infinite-dimensional 
Fr\'echet space. Then the sets \break
$\mathcal{L}(X) \setminus \mathcal{DR}(X), \, \mathcal{L}(X) \setminus \mathcal{CYC}(X), \,  \mathcal{L}(X) \setminus \mathcal{INJ}(X)$,
\, $\mathcal{L}(X) \setminus \mathcal{SRJ}(X), \, \mathcal{L}(X) \setminus \mathcal{SC}(X)$ \,and \,$\mathcal{L}(X) \setminus \mathcal{HC}(X)$
\,are SOT-dense-lineable and SOT-spaceable in $\mathcal{L}(X)$. In particular, if \,$X$ is an infinite-dimensional Banach space, then all of these sets are spaceable in
$\mathcal{L}(X)$ with respect to the operator norm topology.
\end{corollary}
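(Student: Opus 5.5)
The corollary follows from Theorems~\ref{Theorem-main-1} and~\ref{Theorem-main-2} by passing to supersets and comparing topologies. Write $\mathcal{A} := \mathcal{L}(X) \setminus (\mathcal{DR}(X) \cup \mathcal{CYC}(X) \cup \mathcal{INJ}(X))$. Since $X$ is Fr\'echet, it is in particular an infinite-dimensional locally convex space. Theorem~\ref{Theorem-main-1} therefore gives an SOT-dense subspace $V \subset \mathcal{A} \cup \{0\}$, and Theorem~\ref{Theorem-main-2} gives an SOT-closed infinite-dimensional subspace $\mathcal{M} \subset \mathcal{A} \cup \{0\}$. Both SOT-dense-lineability and SOT-spaceability are inherited by any superset $\mathcal{B} \supset \mathcal{A}$, since $V, \mathcal{M} \subset \mathcal{B} \cup \{0\}$ as well. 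So the main task is to check that each of the six sets contains $\mathcal{A}$.

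First, $\mathcal{A}$ is contained in $\mathcal{L}(X)\setminus\mathcal{DR}(X)$, in $\mathcal{L}(X)\setminus\mathcal{CYC}(X)$ and in $\mathcal{L}(X)\setminus\mathcal{INJ}(X)$ by definition. Next, by \eqref{equation-inclusions} we have $\mathcal{SRJ}(X) \subset \mathcal{DR}(X)$ and $\mathcal{HC}(X) \subset \mathcal{SC}(X) \subset \mathcal{DR}(X)\cap\mathcal{CYC}(X)$. Taking complements,
\[
\mathcal{L}(X)\setminus\mathcal{DR}(X) \subset \mathcal{L}(X)\setminus\mathcal{SRJ}(X), \qquad \mathcal{L}(X)\setminus\mathcal{CYC}(X) \subset \mathcal{L}(X)\setminus\mathcal{SC}(X) \subset \mathcal{L}(X)\setminus\mathcal{HC}(X).
\]
All six sets therefore contain $\mathcal{A}$, and the first assertion follows.

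For the Banach case, I would reuse the subspace $\mathcal{M}$ from Theorem~\ref{Theorem-main-2}. It is infinite-dimensional and lies in each of the six sets together with $0$. The remaining point is that it is closed for the operator norm. If $\|T_k - T\| \to 0$, then $T_k x \to Tx$ for every $x$, so every norm-convergent sequence is also SOT-convergent. Hence the norm topology on $\mathcal{L}(X)$ is finer than the SOT, and every SOT-closed set is norm-closed. Since norm closedness is sequential, it suffices that the norm limit of a sequence in $\mathcal{M}$ is its SOT limit, which lies in $\mathcal{M}$. Thus $\mathcal{M}$ witnesses norm-spaceability.

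The only step needing care is this direction of the topology comparison. Closedness transfers from the coarser SOT to the finer norm topology, whereas denseness would go the other way. This is why the Banach statement asserts only norm-spaceability and not norm dense-lineability, which is consistent with the open Problem stated earlier. Everything else is direct bookkeeping of inclusions.
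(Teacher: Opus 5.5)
Your proposal is correct and follows the paper's route. All six sets contain $\mathcal{L}(X) \setminus (\mathcal{DR}(X) \cup \mathcal{CYC}(X) \cup \mathcal{INJ}(X))$ by the inclusions \eqref{equation-inclusions}, so Theorems~\ref{Theorem-main-1} and~\ref{Theorem-main-2} transfer to them, and the SOT-closed subspace $\mathcal{M}$ is norm-closed because the operator norm topology is finer than the SOT (your sequential argument for this is valid since the norm topology is metrizable).
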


\begin{remark}
{\rm Regarding {\it non-injectivity,} the following valuable related result has been obtained by Aires and Botelho in \cite[Example 2.2 and Corollary 4.1]{botelho-aires}:
Let $0 < p \le 1 \le q <\infty$, let $E$ be a $p$-Banach space such that $\dim E > 1$ and $E^* \neq \{0\}$, and let $F$ be a Banach space. Also, denote by $X$ one of the sequence classes $\ell_q,\, c_0$ or $\ell_\infty$, and by $X(F)$ the corresponding sequence space with values in $F$. Then the set of non-injective bounded linear operators from $E$ to $X(F)$ is pointwise spaceable in $\mathcal{L} \left( E, X(F) \right)$. Other findings about pointwise spaceability of families of (not ne\-ce\-ssa\-ri\-ly linear) non-injective mappings can be seen in \cite[Proposition 4.7 and Corollary 4.9]{botelho-aires}.
}
%
%
\end{remark}

\section{Spaceability of the family of non-cyclic operators on sequence spaces}

\quad It follows from Corollary \ref{corollary} that for an infinite-dimensional Banach space $X$ the family of non-cyclic operators is norm-spaceable in $\mathcal{L}(X)$.
In the case that $X$ is the sequence space $\ell_p$ ($1 \le p < \infty$), we can improve this result in the way
given in the next theorem.
In fact, we can also get that the operators under analysis do not have dense range.
In addition, the result will also hold in the non-locally convex case $0 < p < 1$.
In this case, we consider in $\mathcal{L}(\ell_p)$ the distance $d(T,S) = \|T - S\|$ generated by the mapping
$\|T\| := \sup \{\|T(x)\|_p : \, \|x\|_p \le 1\}$, where $\|(x_n)\|_p = \sum_{n=1}^{\infty} |x_n|^p$.
It is easy to check that $\| \cdot \|$ is an F-norm generating the topology of uniform convergence on bounded subsets of $\ell_p$.


\begin{theorem}\label{theo4.1}
For every  \,$p > 0$, the set \,$\mathcal{INJ}(\ell_p) \setminus (\mathcal{CYC} (\ell_p) \cup \mathcal{DR}(\ell_p))$ contains, except for zero, an isometric copy of \,$\ell_p$.

\end{theorem}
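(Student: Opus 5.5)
The plan is to build an explicit linear map $J:\ell_p\to\mathcal{L}(\ell_p)$ that is a tensor-product embedding, $J(a)x = a\otimes x$, realised inside $\ell_p$ through a relabelling of coordinates. Fix a bijection $\sigma:\N\times\N\to\N\setminus\{1,2\}$. For $a=(a_k)\in\ell_p$ and $x=(x_j)\in\ell_p$, define $J(a)x=y$ by
\[
y_{\sigma(k,j)} := a_k x_j \quad (k,j\in\N), \qquad y_1=y_2:=0 .
\]
The point of $\sigma$ is that it leaves two coordinates unused, and these will kill both dense range and cyclicity. Everything rests on the multiplicative identity for the $\ell_p$ size:
\[
\sum_{k,j}|a_k|^p|x_j|^p=\Bigl(\sum_k|a_k|^p\Bigr)\Bigl(\sum_j|x_j|^p\Bigr).
\]
For $p\ge 1$ this gives $\|J(a)x\|_p=\|a\|_p\|x\|_p$. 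For $0<p<1$, with the F-norm $\|x\|_p=\sum|x_n|^p$, it gives $\|J(a)x\|_p=\|a\|_p\|x\|_p$ directly.

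First, the routine checks. The identity shows $J(a)$ is well defined and bounded, hence continuous. Linearity of $J(a)$ in $x$ and of $J$ in $a$ is clear. Taking the supremum over $\|x\|_p\le 1$ gives $\|J(a)\|=\|a\|_p$ in both regimes; for $p<1$ the supremum equals $\|a\|_p$, since the unit ball contains vectors with $\|x\|_p=1$. By linearity, $d(J(a),J(b))=\|J(a-b)\|=\|a-b\|_p$, so $J$ is an isometry and $J(\ell_p)$ is an isometric copy of $\ell_p$.

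Next, fix $a\ne0$ and set $T:=J(a)$. I check the three operator properties in turn.
\begin{enumerate}
\item[$\bullet$] \emph{Injectivity.} Pick $k_0$ with $a_{k_0}\ne0$. If $Tx=0$, then $a_{k_0}x_j=0$ for every $j$, so $x=0$.
\item[$\bullet$] \emph{Non-dense range.} $T(\ell_p)$ lies in the closed proper subspace $\{y: y_1=0\}$, so $T$ does not have dense range.
\item[$\bullet$] \emph{Non-cyclicity.} Copy the final step of the proof of Theorem~\ref{Theorem-main-2}. With $e_1^*,e_2^*$ the first two coordinate functionals, we have $e_i^*\circ T^n=0$ for all $n\ge1$ and $i=1,2$. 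Hence every $w\in{\rm span}(\mathcal{O}(T;u))$ satisfies $(e_1^*(w),e_2^*(w))=\mu\,(u_1,u_2)$ for some scalar $\mu$. So the closure of ${\rm span}(\mathcal{O}(T;u))$ cannot contain both $e_1$ and $e_2$: approximating $e_1$ and $e_2$ would require $\mu_j(u_1,u_2)\to(1,0)$ and $\mu_j'(u_1,u_2)\to(0,1)$, which is impossible exactly as in that proof.
\end{enumerate}

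I expect the main obstacle to be obtaining an \emph{exact} isometry for every $p>0$. Obvious alternatives fail here: multiplication operators give a supremum norm, convolution fails Young's inequality for $p>1$, and rank-one perturbations destroy injectivity. The tensor-product relabelling above is chosen precisely because the $p$-th powers factor exactly, which handles $p\ge1$ and $0<p<1$ at the same time. Once that identity is in place, the remaining steps are the straightforward verifications above.
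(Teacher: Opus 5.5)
Your proposal is correct and is essentially the paper's proof: the paper's $\Phi(\lambda)=\sum_k \lambda_k F_k$, where $F_k$ places $x$ on the block $\mathbb{N}_k$ of a partition of $\{3,4,\dots\}$, is exactly your $J(a)x=a\otimes x$ relabelled via $\sigma(k,i)=n_i^{(k)}$. The norm factorisation, the non-dense-range argument from the vanishing leading coordinates, and the two-coordinate non-cyclicity argument all match; the only cosmetic difference is that you check injectivity coordinatewise, whereas the paper reads it off from $\|\Phi(\lambda)(x)\|_p=\|\lambda\|_p\|x\|_p$.
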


\begin{proof}
Let us take a partition of $\widetilde{\mathbb{N}} = \{3,4,5, \dots \}$ into countably many infinite, pairwise disjoint, subsets $\mathbb{N}_k$ ($k = 1,2, \dots$).
For each integer $k \in \widetilde{\mathbb{N}}$, we write $\mathbb{N}_k = \{n_1^{(k)} < n_2^{(k)} < \cdots\}$ and define $F_k: \ell_p \to \ell_p$ by
\[
F_k((x_i)_{i=1}^\infty) = (y_j)_{j=1}^\infty,
\]
where
\[
y_j = \sum_{i \in \mathbb{N}} x_i \, \delta_{j, n_i^{(k)}}.
\]
Here, as usual, $\delta_{m,n} = \left\{\begin{array}{ll}
                     1 \, \hbox{ \ if } \, m = n \\
                    0 \, \hbox{ \ if } \, m \ne n .
                     \end{array}\right.$

In other words, $F_k$ maps the entire sequence $x =(x_i)$ into the coordinates indexed by $\mathbb{N}_k$, preserving the original values of $(x_i)$ but relocating them to the positions in $\mathbb{N}_k$. Clearly, $F_k \in \mathcal{L}(\ell_p)$  for each $k \in \mathbb{N}$.
In fact, each $F_k$ is an isometry on $\ell_p$, that is, $\|F_k(x)\|_p = \|x\|_p$ for all $x = (x_i)_{i=1}^\infty \in \ell_p$.

\vskip 3pt

Now we define the mapping \,$\Phi : \ell_p \longrightarrow \mathcal{L}(\ell_p)$ \,by
\[
\Phi (\lambda ) := \sum_{k=1}^\infty \lambda_k F_k \quad \text{for each } \,\, \lambda = (\lambda_k)_{k=1}^\infty \in \ell_p.
\]
Since, for all $x \in \ell_p$, the supports of the sequences $F_k(x)$ and $F_l(x)$ are disjoint if $k \neq l$, the mapping \,$\Phi$ \,is well defined.
Clearly, $\Phi$ is linear and, if we denote
\,$\displaystyle{m(p) = \left\{\begin{array}{ll} p & \mbox{if
$p \ge 1$} \\ 1 & \mbox{if $0 < p < 1$,} \end{array} \right.}$
then we have
\begin{equation*}
\begin{split}
\|\Phi (\lambda )(x)\|_p^{m(p)} &= \left\| \sum_{k=1}^\infty \lambda_k F_k(x) \right\|_p^{m(p)} = \sum_{k=1}^\infty |\lambda_k|^p \|F_k(x)\|_p^{m(p)} \\
                                      &=\sum_{k=1}^\infty |\lambda_k|^p \|x\|_p^{m(p)} = (\|\lambda\|_p \|x\|_p)^{m(p)}.
\end{split}
\end{equation*}
Consequently, $\|\Phi (\lambda )(x)\|_p = \|\lambda\|_p \|x\|_p$ \,for all \,$x \in \ell_p$.
Hence  $\|\Phi (\lambda )\| = \|\lambda\|_p$ for all $\lambda \in \ell_p$, which tells us
that \,$\Phi$ \,is an isometry. Therefore, we also have that $\Phi(\lambda)$ is injective except for $\lambda = (0,0,0,\dots)$, and thus $\Phi(\lambda) \in \mathcal{INJ}(\ell_p)$ for all $\lambda \neq (0,0,0,\dots)$.
\vskip 3pt

Consequently, it only remains to prove that if \,$T = \Phi (\lambda )$ \,for some \,$\lambda \in \ell_p$ \,then the following is satisfied:
\begin{enumerate}
\item[$\bullet$] $T$ has not dense range.
\item[$\bullet$] $T$ is not cyclic.
\end{enumerate}

Since $1,2 \not\in \widetilde{\N}$, we obtain that for all $x \in \ell_p$ the first two coordinates of $T(x)$ are $0$.
Then $T(\ell_p) \subset \{(x_1,x_2,x_3, \dots ) \in \ell_p: \, x_1 = 0 = x_2\}$, and the latter set is a proper closed subset of \,$\ell_p$.
Thus, $T$ cannot have dense range.

\vskip 3pt

Finally, we argue similarly to the final part of the proof of Theorem \ref{Theorem-main-2}. Assume, by way of contradiction, that $T$ is cyclic. Then there would exist a vector $u = (u_1,u_2, \dots ) \in \ell_p$ enjoying the property that,
for every vector $v = (v_1,v_2, \dots ) \in \ell_p$, we can find a sequence \,$(w_j)_{j=1}^\infty \subset {\rm span}(\mathcal{O}(T;u))$ \,such that
\,$w_j \to v$ \,as \,$j \to \infty$. Observe that every $w_j$ has the form
$$
w_j = \mu_j u + \sum_{n=1}^{m(j)} c_{n,j} T^n u
$$
for certain scalars \,$\mu_j$ and $c_{n,j}$ ($n \in \{1, \dots, m(j)\}, \, j \in \N$).
Since \,$T^n u \in T(\ell_p)$ \,for all \,$n \in \N$, we obtain that the first two coordinates of \,$z_j := \sum_{n=1}^{m(j)} c_{n,j} T^n u$ \,are \,$0$.
If \,$\pi_1,\pi_2 : \ell_p \to \K$ \,represent, respectively, the projections on the first and the second coordinates, the continuity of \,$\pi_1,\pi_2$
\,entails for \,$s=1,2$ \,that
$$
\mu_j u_s = \mu_j u_s + 0 = \pi_s(\mu_j u) + \pi_s(z_j) = \pi_s(w_j)  \longrightarrow \pi_s(v) = v_s \hbox{ \ as \ } j \to \infty.
$$
If \,$u_1 = 0$, then taking \,$v = (1,0,0,0, \dots )$ \,yields \,$0 = \mu_j u_1 \to 1$, which is absurd.
If \,$u_1 \ne 0$, then taking \,$v = (0,1,0,0, \dots )$ \,yields \,$\mu_j u_1 \to 0$ (hence $\mu_j \to 0$) \,and \,$0 \leftarrow \mu_j u_2 \to 1$, which is again absurd.
This contradiction concludes the proof.
\end{proof}

\section{Spaceability of the class of non-$N$-supercyclic operators}

\quad The following dynamical concept was introduced and studied by Feldmann in \cite{feldman}.

\begin{definition}
{\em Let $X$ be a Fréchet space and $N \geq 1$. An operator $T \in \mathcal{L}(X)$ is said to be {\em $N$-supercyclic} if there exists an $N$-dimensional vector subspace $W$ such that the set
\[
\mathcal{O}_{W}(T) := \bigcup_{n=0}^\infty T^n(W)
\]
is dense in $X$.}
\end{definition}

Let $\mathcal{SC}^N(X) :=\{ T \in \mathcal{L}(X) : T \mbox{ is } N\mbox{-supercyclic}\}$ be the set of $N$-supercyclic operators over a Banach space $X$. It is obvious that $\mathcal{SC}^1(X) = \mathcal{SC}(X)$ and also that every $N$-supercyclic operator is $(N+1)$-supercyclic. In particular, supercyclic operators are always $N$-supercyclic for all $N\geq 1$. Moreover, the following relations
among classes of operators are clear:
\[
\begin{tikzcd}[column sep=3em, row sep=1.5em]
T \in \mathcal{HC}(X) \arrow[r, Rightarrow]
& T \in \mathcal{SC}(X) \arrow[d, Rightarrow] \arrow[r, Rightarrow]
& T \in \mathcal{CYC}(X) \\
& T \in \mathcal{SC}^N(X)  &
\end{tikzcd}
\]

All implications are strict. Examples of N-supercyclic operators that are neither supercyclic nor cyclic are provided in \cite[Section~3]{feldman} and \cite[Section~1]{bourdonshapiro}, respectively.
Furthermore, the authors provided examples of $(N+1)$-supercyclic operators that fail to be $N$-supercyclic.

\vskip 3pt

According to Corollary \ref{corollary}, the set $\mathcal{L}(X) \setminus \mathcal{SC}(X)$ is $SOT$-spaceable in $\mathcal{L}(X)$.
The following theorem provides a refinement of this result.

\begin{theorem}\label{theo5.2} Let $X$ be an infinite-dimensional Fréchet space and $N \geq1$. Then the set
\[
\mathcal{L}(X) \setminus \mathcal{SC}^N(X)
\]
is $SOT$-spaceable in $\mathcal{L}(X)$. In particular, it is norm-spaceable if \,$X$ is a Banach space.
\end{theorem}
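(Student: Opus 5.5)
We adapt the construction in the proof of Theorem~\ref{Theorem-main-2}, replacing the two "reserved" coordinates by $N+1$ of them. By Theorem~\ref{MazurBessagaKadecPelczynski}, choose a basic sequence $(x_n)_{n\ge1}$ in $X$. Extend its coordinate functionals $e_n^*$ to all of $X$ via Hahn--Banach, and let $K_n := e_n^*(\cdot)\,x_n$. Set
$$\mathcal{M} := \overline{{\rm span}(\{K_n : n \ge N+2\})}^{\rm SOT}.$$
This is a SOT-closed subspace of $\mathcal{L}(X)$. It is infinite-dimensional because the $K_n$ are linearly independent: $K_n(x_m)=\delta_{nm}x_n$.

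The key invariance carries over from the earlier proof. For $i\in\{1,\dots,N+1\}$ and $n\ge N+2$ we have $e_i^*\circ K_n=0$. By linearity and SOT-limits, as before, $e_i^*\circ T=0$ for every $T\in\mathcal{M}$. Hence $e_i^*\circ T^n=0$ for all $n\ge1$.

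Now let $P:X\to\K^{N+1}$ be $P(x)=(e_1^*(x),\dots,e_{N+1}^*(x))$. It is continuous and surjective, since $P(x_i)$ is the $i$-th unit vector. Suppose $T\in\mathcal{M}$ were $N$-supercyclic with an $N$-dimensional subspace $W$. Then
$$P\Bigl(\bigcup_{n\ge0}T^n(W)\Bigr)=P(W)\cup\{0\}=P(W),$$
since $P(T^n w)=0$ for $n\ge1$. This set is a linear subspace of $\K^{N+1}$ of dimension at most $N$, so it is closed and proper. Density of $\mathcal{O}_W(T)$ in $X$ together with continuity and surjectivity of $P$ would force $P(\mathcal{O}_W(T))$ to be dense in $\K^{N+1}$, a contradiction. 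Therefore $\mathcal{M}\setminus\{0\}\subset\mathcal{L}(X)\setminus\mathcal{SC}^N(X)$.

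For the Banach case, note that the SOT is weaker than the norm topology. So $\mathcal{M}$ is also norm-closed, and it remains infinite-dimensional. The only delicate point is the SOT-closure step, which transfers $e_i^*\circ T=0$ from ${\rm span}(\{K_n\})$ to $\mathcal{M}$. It is handled exactly as in Theorem~\ref{Theorem-main-2}, using nets and the continuity of each $e_i^*$.
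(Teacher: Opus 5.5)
Your proof is correct and follows the paper's approach. You use the same subspace $\mathcal{M}=\overline{{\rm span}(\{K_n : n\ge N+2\})}^{\rm SOT}$ and the same key fact that $e_i^*\circ T^n=0$ for $i\le N+1$ and $n\ge 1$. The paper reaches the contradiction by approximating each $x_1,\dots,x_{N+1}$ from the orbit and showing $x_1,\dots,x_{N+1}\in W$. You instead push forward under $P=(e_1^*,\dots,e_{N+1}^*)$ and note that $P(\mathcal{O}_W(T))=P(W)$ is a proper subspace of $\K^{N+1}$, which is the same idea written a little more cleanly, without sequences and directly for general $N$.
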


\begin{proof} To avoid confusion, we will only detail the case $N=2$. Let $(x_n)_{n=1}^\infty$ be a basic sequence in $X$
(see Theorem \ref{MazurBessagaKadecPelczynski}) and $(e^*_n)_{n=1}^\infty$ be the coordinate functionals associated with the basis, extended to the whole space via the Hahn--Banach Theorem. Consider the operators $K_n(x)=e^*_n(x)x_n$ for $n \in \N$.
These operators are linearly independent because the $x_n$'s are.
Define the set 
\[
\mathcal{M} := \overline{{\rm span} \left(\{K_n : n \geq 4\} \right)}^{\rm SOT}.
\]
Trivially, $\mathcal{M}$ is an infinite-dimensional closed subspace of $\mathcal{L}(X)$.
It remains to show that $\mathcal{M}$
is composed of operators that are not $2$-supercyclic.

\vskip 3pt

With this aim, fix $T \in \mathcal{M}$. Then there exists a net $(T_\alpha)$ in ${\rm span}\{K_n : n \geq 4\}$ with $T_\alpha(x) \to T(x)$ for all $x \in X$. Note that $e^*_1 \circ T_\alpha = 0 = e^*_2 \circ T_\alpha =e^*_3 \circ T_\alpha$, from which it follows that
\[
e^*_1 \circ T^n= 0 = e^*_2 \circ T^n = e^*_3 \circ T^n \ \, (n \in \mathbb{N}).
\]

Suppose, by way of contradiction, that there exists a 2-dimensional vector space $W$ such that $\mathcal{O}_W (T)$ is dense in $X$.
Thus there exist sequences $(n_k)_{k}, (m_k)_k , (l_k)_k$ of nonnegative integers with
\begin{equation*}
T^{n_k}(v_{n_k}) \longrightarrow x_1 \,,
\quad 
T^{m_k}(w_{m_k}) \longrightarrow x_2 \,,
\quad \text{and} \quad
T^{l_k}(z_{l_k}) \longrightarrow x_3 \,,
\quad \text{as } \,k \to \infty,
\end{equation*}
where \,$v_{n_k},w_{m_k}, z_{m_k} \in W$. As usual, we set $T^0 :=$ the identity on $X$. Consider the set
$$
A := \{k \in \N: n_k \neq 0\}.
$$
Then $A$ is finite, otherwise we would have $T^{n_k}(v_{n_k}) \xrightarrow{k \in A} x_1$, and by the continuity of $e^*_1$ we would obtain
\[
0 = e^*_1 \circ T^{n_k}(v_{n_k}) \xrightarrow{k \in A} e^*_1(x_1) = 1,
\]
a contradiction.
Similarly, $\{k \in \N: m_k \neq 0\}$ and $\{k \in \N: l_k \neq 0\}$ are finite.
Then there exists \,$k_0 \in \N$ \,such that
$$
v_{n_k} = T^{n_k}(v_{n_k}) \quad w_{m_k} = T^{m_k}(w_{m_k}) \quad \hbox{and} \quad z_{l_k} = T^{l_k}(z_{l_k}) \,\, \hbox{ for all } \, k \ge k_0.
$$
Consequently, we obtain
\begin{equation*}
v_{n_k} \longrightarrow x_1 \quad  \quad w_{m_k} \longrightarrow x_2 \quad \hbox{and} \quad z_{l_k} \longrightarrow x_3 \quad \hbox{as } \, k \to \infty.
\end{equation*}
Since $W$ is finite dimensional, it is closed in $X$. Thus $x_i \in W$ for $i=1,2,3$, which is absurd because \,$\dim W =2$ \,and the system \,$\{x_1,x_2,x_3\}$ \,is linearly independent. This contradiction concludes the proof in the case $N = 2$.
The proof for a general $N \in \N$ is analogous, just by considering the set 
\[
\mathcal{M} := \overline{{\rm span} \left(\{K_n : n \geq N+2\} \right)}^{\rm SOT}.
\]
\vskip -18pt
\end{proof}

\vskip 5mm

\noindent\textbf{Acknowledgments and Funding.} We thank Professor Thiago Alves for fruitful conversations regarding on hypercyclicity and $N$-supercyclicity topics. Nacib G. Albuquerque was supported in part by CNPq Grants 406457/2023-9 and 403964/2024-5. Gustavo Ara\'ujo was partially supported by UEPB-PRPGP Nº 10/2025. Luis Bernal-Gonz\'alez thanks IMUS-Mar\'{\i}a de Maeztu grant CEX2024-001517-M - Apoyo a Unidades de Excelencia Mar\'{\i}a de Maeztu (funded by MICIU/AEI/10.13039/501100011033), and the Projects Programa operativo PPIT-FEDER Andaluc\'{\i}a 2021-2027 SOL2024-31596 and SOL\-2024-31708 for partially supporting this research. Evandio Dem\'etrio Jr. was supported by a FAPESQ-PB scholarship and CNPq Grant 406457/2023-9.

\vskip 5mm

\noindent\textbf{AI Disclosure Statement.} Generative AI tools were used solely for grammar checking and for correcting typographical and minor language errors. They were not used in the development of the mathematical content, results, or proofs of the manuscript.



\begin{thebibliography}{99}
\bibitem{aronbjms} {R.~Aron, L.~Bernal-González, P.~Jim\'enez-Rodríguez, G.~Mu\~{n}oz-Fernández and J.B.~Seoane-Sepúlveda}, \emph{On the size of special families of linear operators}, {Linear Algebra Appl.} {\bf 544} (2018), 186--205.

\bibitem{ABPS} R.M.~Aron, L. Bernal-Gonz\'alez, D.~Pellegrino and J.B.~Seoane-Sep\'ulveda, \emph{Lineability: The Search for Li\-ne\-a\-ri\-ty in Mathematics},
Monographs and Research Notes in Mathematics, CRC Press, Boca Raton, 2016.

\bibitem{Bayart} F.~Bayart and E.~Matheron, \emph{Dynamics of linear operators}, Cambridge University Press, 2009.

\bibitem{bernalopen} L.~Bernal-Gonz\'alez, \emph{The algebraic size of the family of injective operators}, {Open Math.}
{\bf 15} (2017), 13--20. 

\bibitem{beschan} J.~Bès and K.C.~Chan, \emph{Denseness of hypercyclic operators on a Fréchet space}, Houston J. Math. \textbf{29} (2003), 195--206.

\bibitem{beschan2} J.~Bès and K.C.~Chan, \emph{Approximation by chaotic operators and by conjugate classes}, J. Math. Anal. Appl. \textbf{284} (2003), 206--212.

\bibitem{bonetperis} J.~Bonet and A.~Peris, \emph{Hypercyclic operators on non-normable Fréchet spaces}, {J. Funct. Anal.} {\bf 159} (1998), 587--596.

\bibitem{botelho-aires} M.~Aires, G.~Botelho, \emph{Spaceability of sets of non-injective maps}, Bull. Braz. Math. Soc., New Series \textbf{56:34} (2025), 14 pages.

\bibitem{bagheri} A. R. Bagheri Salec, S. M. Tabatabaie and A.M Juweed, \emph{Spaceability of the set of surjective bounded operators of Banach function spaces}, J. Iran. Math. Soc. \textbf{6} (2025), no. 2, 107--113.


\bibitem{Chan} K.C.~Chan, \emph{The density of the hypercyclic operators on a Hilbert space}, J. Operator Theory {\bf 47} (1999), 231--244.


\bibitem{Diestel} J.~Diestel, \emph{Sequences and series in Banach spaces}, {Graduate Texts in Mathematics}, vol.~92, {Springer-Verlag}, {New York}, 1984.

\bibitem{diniz} {D.~Diniz, V.V.~F\'avaro, D.~Pellegrino and A.B.~Raposo},
\emph{Spaceability of the sets of surjective and injective operators between sequence spaces},
{Rev. Real Acad. Cien. Ex. F\'{\i}s. Nat. Ser. A Mat.}
{\bf 114:194} (2020), 11 pp.

\bibitem{feldman} N.S.~Feldman, \emph{$n$-supercyclic operators}, Studia Math. {\bf 151} (2) (2002), 141--159.

\bibitem{bourdonshapiro}{P.S.~Bourdon, N.S.~Feldman and J.H.~Shapiro}, \emph{Some properties of $N$-supercyclic operators}, Studia Math. \textbf{165} (2) (2004), 135--157.

\bibitem{fillmoresw} {P.A.~Fillmore, J.G.~Stampfli and J.P.~Williams}, \emph{On the numerical range, the essential spectrum, and a problem of Halmos},
Acta Sci. Math. (Szeged) \textbf{33} (1972), 179--192.


\bibitem{godefroyshapiro} {G.~Godefroy and J.H.~Shapiro}, \emph{Operators with dense, invariant, cyclic vectors manifolds},
{J. Funct. Anal.} \textbf{98} (1991), No.~2, 229--269.

\bibitem{grossesurvey}  K.-G.~Grosse-Erdmann, \emph{Universal families and
hypercyclic operators}, Bull. Amer. Math. Soc. {\bf 36} (1999), 345--381.

\bibitem{Grosse} K.-G.~Grosse-Erdmann and A.~Peris Manguillot, \emph{Linear chaos}, Springer Verlag, London, 2011.

\bibitem{Herrero} D.A.~Herrero, \emph{Limits of hypercyclic and supercyclic operators}, J. Funct. Anal. \textbf{99} (1991), 179--190.

\bibitem{Herzog} G. Herzog, \emph{On linear operators having supercyclic vectors}, Studia Math. \textbf{103} (1992), No.~3, 295--298. 

\bibitem{kamtangupta} P.K.~Kamtan and M.~Gupta, \emph{Theory of bases and cones}, Pitman, Boston, 1985.




\bibitem{prajitura} G.T.~Prajitura, \emph{The density of the hypercyclic operators in the strong operator topology},
Integr. Equat. Oper. Th. \textbf{49} (2004), 559--560.

\bibitem{rolewicz} S.~Rolewicz, \emph{On orbits of elements}, {Studia Math.} \textbf{32} (1969), 17--22.


\bibitem{RudinAF} W.~Rudin, \emph{Functional Analysis}, 2nd edition, McGraw-Hill Book Co., New York, 1991.

\bibitem{simon} B.~Simon, \emph{Operator Theory}, American Mathematical Society, Providence, Rhode Island, 2015.

\bibitem{Wu} P.Y.~Wu, \emph{Sums and products of cyclic operators}, Proc. Amer. Math. Soc. \textbf{122} (1994), 99--107.
\end{thebibliography}
\end{document}